\numberwithin{equation}{section}
\title{\bf Direct Approach of Indefinite Linear-Quadratic Mean Field Games \thanks{This work is supported by National Key R\&D Program of China (2022YFA1006104), National Natural Science Foundations of China (12271304), and Shandong Provincial Natural Science Foundations (ZR2022JQ01, ZR2020ZD24, ZR2019ZD42).}}
\author{\normalsize  Wenyu Cong\thanks{\it School of Mathematics, Shandong University, Jinan 250100, P.R. China, E-mail: congwenyu@mail.sdu.edu.cn} , Jingtao Shi\thanks{\it Corresponding author. School of Mathematics, Shandong University, Jinan 250100, P.R. China, E-mail: shijingtao@sdu.edu.cn}}
\newtheorem{mypro}{Proposition}[section]
\newtheorem{mythm}{Theorem}[section]
\begin{document}

    \maketitle

    \noindent{\bf Abstract:}\quad This paper is concerned with an indefinite linear-quadratic mean field games of stochastic large-population system, where the individual diffusion coefficients can depend on both the state and the control of the agents. Moreover, the control weights in the cost functionals could be indefinite. A direct approach is used to derive the $\epsilon$-Nash equilibrium strategy. First, we formally solving an $N$-player game problem within a vast and finite population setting. Subsequently, decoupling or reducing high-dimensional systems by introducing two Riccati equations explicitly yields centralized strategies, contingent on the state of a specific player and the average state of the population. As the population size $N$ goes infinity, the construction of decentralized strategies becomes feasible. Then, we demonstrated they are an $\epsilon$-Nash equilibrium. Numerical examples are provided to demonstrate the effectiveness of the proposed strategies.

    \vspace{2mm}

    \noindent{\bf Keywords:}\quad mean field games, direct approach, linear-quadratic stochastic differential game, forward-backward stochastic differential equation, $\epsilon$-Nash equilibrium, indefinite control weight

    \vspace{2mm}

    \noindent{\bf Mathematics Subject Classification:}\quad 93E20, 60H10, 49K45, 49N70, 91A23

    \section{Introduction}

    {\it Mean field games} (MFGs) have been attracting growing scholarly attention, finding applications in diverse fields such as system control, applied mathematics, and economics (\cite{Bensoussan-Frehse-Yam-13}, \cite{Gomes-Saude-14}, \cite{Caines-Huang-Malhame-17}, \cite{Carmona-Delarue-18}).
    MFG theory serves as a framework for describing the behavior of models characterized by large populations, where the influence of the overall population is significant, despite the negligible impact on individual entities.
    It investigates the existence of Nash equilibria.
    This is accomplished by leveraging the relationship between the finite and corresponding infinite-limit population problems.
    The methodological foundations of MFG, initially proposed by Lasry and Lions \cite{Lasry-Lions-07} and independently by Huang et al. \cite{Huang-Malhame-Caines-06}, have proven effective and tractable for analyzing weakly coupled stochastic controlled systems with mean field interactions, establishing approximate Nash equilibria.
    In particular, within the {\it linear-quadratic} (LQ) framework, MFGs provide a versatile modeling tool applicable to a wide range of practical problems.
    The solutions derived from LQ-MFGs exhibit notable and elegant properties.
    Current scholarly discourse has extensively explored MFGs, particularly within the LQ framework (\cite{Li-Zhang-08}, \cite{Bensoussan-Sung-Yam-Yung-16}, \cite{Moon-Basar-17}, \cite{Huang-Zhou-20}).
    Huang et al. \cite{Huang-Caines-Malhame-07} conducted a study on $\epsilon$-Nash equilibrium strategies in the context of LQ-MFGs with discounted costs.
    This investigation was rooted in the fixed-point approach. Subsequently, it was applied to scenarios involving long run average costs, in Li and Zhang \cite{Li-Zhang-08}.
    In the domain of MFGs featuring major players, Huang \cite{Huang-10} delved into continuous-time LQ games, providing insights into $\epsilon$-Nash equilibrium strategies.
    Huang et al. \cite{Huang-Wang-Wu-16} introduced a backward-major and forward-minor setup for an LQ-MFG, and decentralized $\epsilon$-Nash equilibrium strategies for major and minor agents were obtained.
    Huang et al. \cite{Huang-Wang-Wu-16b} examined the backward LQ-MFG of weakly coupled stochastic large population systems under both full and partial information scenarios.
    Huang and Li \cite{Huang-Li-18} delved into an LQ-MFG concerning a class of stochastic delayed systems.
    Xu and Zhang \cite{Xu-Zhang-20} explored a general LQ-MFG for stochastic large population systems, where the individual diffusion coefficient is contingent on the state and control of the agent.
    Bensoussan et al. \cite{Bensoussan-Feng-Huang-21} considered an LQ-MFG with partial observation and common noise.

    Conventionally, two approaches are employed in the resolution of mean field games.
    One is termed the fixed-point approach (or top-down approach, see \cite{Huang-Caines-Malhame-07}, \cite{Huang-Malhame-Caines-06}, \cite{Li-Zhang-08}, \cite{Bensoussan-Frehse-Yam-13}, \cite{Carmona-Delarue-18}), which initiates the process by employing mean field approximation and formulating a fixed-point equation.
    By tackling the fixed-point equation and scrutinizing the optimal response of a representative player, decentralized strategies can be formulated.
    The alternative approach is known as the direct approach (or bottom-up approach, refer to \cite{Lasry-Lions-07}, \cite{Wang-Zhang-Zhang-20}, \cite{Huang-Zhou-20}, \cite{Wang-24}).
    This method commences by formally solving an $N$-player game problem within a vast and finite population setting.
    Subsequently, by decoupling or reducing high-dimensional systems, centralized control can be explicitly derived, contingent on the state of a specific player and the average state of the population.
    As the population size $N$ approaches infinity, the construction of decentralized strategies becomes feasible.
    In the direct approach, there are no fixed-point conditions.
    In \cite{Huang-Zhou-20}, the authors discussed the connection between these two routes within an LQ setting.

    In this paper, we explore an indefinite LQ-MFG, where the individual diffusion coefficients can depend on both the state and the control of the agents, the control weights in the cost functionals could be indefinite.
    We solve the problem by directly decoupling high-dimensional {\it forward-backward stochastic differential equations} (FBSDEs) via introducing two Riccati equations, which in turn gives the centralized strategy.
    Then, inspired by the form of above Riccati equations, the limiting form of the Riccati equations is devised and its solvability is analyzed.
    The solvability of the original Riccati equations follows from the continuous dependence of the solution on the parameter.
    By the law of large numbers, we get the decentralized strategy, and verify that it is an $\epsilon$-Nash equilibrium strategy.

    The main contributions of the paper are outlined as follows.

    $\bullet$ In the drift and diffusion terms of the individual's dynamical system, both incorporate the individual's own state and control.
    There is also a coupling of the individual state to the population state average in the cost function.
    These new type bring mathematical difficulties, and, to some degree, they have some potential applications in reality.

    $\bullet$ Instead of employing the traditional fixed-point methodology such as \cite{Xu-Zhang-20}, we adopt a direct approach to address the complexities inherent in our game problem, which was used in \cite{Huang-Zhou-20}, \cite{Wang-Zhang-Zhang-20}.
    This strategic shift involves the meticulous decoupling of the high-dimensional Hamiltonian system using mean field approximations.
    By disentangling the high-dimensional Hamiltonian system through mean field approximations, we formulate a set of decentralized strategies for all players. This set is subsequently demonstrated to constitute an $\epsilon$-Nash equilibrium.

    The paper is organized as follows.
    In Section 2, we formulate the problem.
    In Section 3, we construct decentralized strategies.
    In Section 4, the proof of the asymptotic optimality is rigorously presented.
    Numerical examples are provided in Section 5 to demonstrate the effectiveness of the proposed strategies.
    Finally, some conclusions are given in Section 6.

    The following notations will be used throughout this paper.
    We use $||\cdot||$ to denote the norm of a Euclidean space, or the Frobenius norm for matrices. For a symmetric matrix $Q$ and a vector $z$, $||z||^2_Q \equiv z^\top Qz$.
    For any real-valued scalar functions $f(\cdot)$ and $g(\cdot)$ defined on $\mathbb{R}$, $f(x) = O(g(x))$ means that there exists a constant $C > 0$ such that $\lim_{x \to \infty} |\frac{f(x)}{g(x)}| = C$, where $|\cdot|$ is an absolute value, which is also equivalent to saying that there exist $C > 0$ and $x$ such that $|f(x)| \le C|g(x)|$ for any $x \ge x'$.

    Let $T>0$ be a finite time duration and $(\Omega, \mathcal{F}, \{\mathcal{H}_t\}_{0 \le t \le T}, \mathbb{P})$ be a complete filtered probability space with the filtration $\{\mathcal{H}_t\}_{0 \le t \le T}$ augmented by all the $\mathbb{P}$-null sets in $\mathcal{F}$. $\mathbb{E}$ denoted the expectation with respect to $\mathbb{P}$. Let $L_{\mathcal{H}}^2(0,T;\cdot)$ be the set of all vector-valued (or matrix-valued) $\mathcal{H}_t$-adapted processes $f(\cdot)$ such that $\mathbb{E}\int^T_0 ||f(t)||^2dt < \infty$ and $L^2_{\mathcal{H}_t}(\Omega;\cdot)$ be the set of $\mathcal{H}_t$-measurable random variables, for $t\in[0,T]$.

    \section{Problem formulation}

    We consider a large-population system with $N$ agents, where $N$ can be arbitrarily large. The states equation of the $i$th agent $\mathcal{A}_i$, is given by the following SDE:
    \begin{equation}\label{state}
        \left\{
        \begin{aligned}
            dx_i(t) =& \big[A(t)x_i(t) + B(t)u_i(t) + f(t)\big]dt\\
                     & + \big[C(t)x_i(t) + D(t)u_i(t) + g(t)\big]dW_i(t),\quad t\in[0,T],\\
            x_i(0) =&\ \xi_i ,
        \end{aligned}
        \right.
    \end{equation}
    where $1 \le i \le N$, $x_i(\cdot)$, $u_i(\cdot)$ and $\xi_i \in L^2_{\mathcal{F}_0^i}(\Omega; \mathbb{R})$ are the state process, control process and initial value (random variable) of $\mathcal{A}_i$.
    In this paper, for simplicity, we assume the dimensions of state process and control process are both one-dimensional.
    Here, $A$, $B$, $C$, $D$, $f$ and $g$ are deterministic $\mathbb{R}$-valued functions.
    $W_i(\cdot), i = 1,\cdots,N$ are a sequence of one-dimensional Brownian motions defined on $(\Omega, \mathcal{F}, \{\mathcal{F}_t\}_{0 \le t \le T}, \mathbb{P})$.
    Let $\mathcal{F}_t$ be the $\sigma$-algebra generated by $\{\xi_i, W_i(s), s \le t, 1 \le i \le N\}$.
    Denote $\mathcal{F}_t^i$ be the $\sigma$-algebra generated by $\{\xi_i, W_i(s), s \le t\}, 1 \le i \le N$.
    We define the decentralized control set for all agents and the $i$th agent $\mathcal{A}_i$ as follows:
    \begin{equation*}
    \begin{aligned}
    \mathscr{U}_d[0,T] &:= \big\{(u_1(\cdot),\cdots,u_N(\cdot))|u_i(\cdot) \in L^2_{\mathcal{F}^i}(0,T;\mathbb{R})\big\}, \\
    \mathscr{U}_d^i[0,T] &:= \big\{u_i(\cdot)|u_i(\cdot) \in L^2_{\mathcal{F}^i}(0,T;\mathbb{R})\big\},
    \end{aligned}
    \end{equation*}
    and the centralized control set for all agents and the $i$th agent $\mathcal{A}_i$:
    \begin{equation*}
    \begin{aligned}
    \mathscr{U}_c[0,T] &:= \big\{(u_1(\cdot),\cdots,u_N(\cdot))|u_i(\cdot) \in L^2_{\mathcal{F}}(0,T;\mathbb{R}), 1 \le i \le N\big\},\\
    \mathscr{U}_c^i[0,T] &:= \big\{u_i(\cdot)|u_i(\cdot) \in L^2_{\mathcal{F}}(0,T;\mathbb{R})\big\}.
    \end{aligned}
    \end{equation*}
    The cost functional of $\mathcal{A}_i$ is given by
    \begin{equation}\label{cost}
    \begin{aligned}
        J^N_i(u_i(\cdot), u_{-i}(\cdot)) &= \frac{1}{2} \mathbb{E} \left\{\int_0^T \left[\Big|\Big|x_i(t) - \Gamma(t) x^{(N)}(t) - \eta(t)\Big|\Big|^2_{Q(t)} + ||u_i(t)||^2_{R(t)} \right]dt\right. \\
            &\qquad\qquad \left.+ \Big|\Big|x_i(T) - \Gamma_0 x^{(N)}(T) - \eta_0\Big|\Big|^2_H\right\},
    \end{aligned}
    \end{equation}
    where $Q$, $R$, $\Gamma$ and $\eta$ are deterministic $\mathbb{R}$-valued functions, $H$, $\Gamma_0$ and $\eta_0$ are scalar constants.
    $x^{(N)}(t) := \frac{1}{N} \sum_{j=1}^{N} x_j(t)$ is called the state average or mean field term of all agents, $u_{-i}(\cdot) := (u_1(\cdot),\cdots,u_{i-1}(\cdot),u_{i+1}(\cdot),\cdots,u_N(\cdot))$.

    We propose the following assumptions:

    \noindent {\bf (A1)} $\{\xi_i, 1 \le i \le N \}$ is a sequence of independent and identically distributed random variables with $\mathbb{E}[\xi_i] = \bar{\xi}, i = 1,2,\cdots,N$, and there exists a constant $C$ such that $\sup_{1 \le i \le N} \mathbb{E} [||\xi_i||^2] \le C.$

    \noindent {\bf (A2)} $\{W_i(t), 1 \le i \le N \}$ are mutually independent, which are independent of $\{\xi_i, 1 \le i \le N \}$.

    \noindent {\bf (A3)} $Q(t) \ge 0,\forall\ 0 \le t \le T$, $H \ge 0$.

    In this paper, we explore the following two problems.

    \noindent {\bf (P)}: Find a Nash equilibrium strategy $u^{N*}(\cdot) := (u_1^*(\cdot),\cdots,u_N^*(\cdot))$ to (\ref{cost}), subject to (\ref{state}), i.e., for all $i$, $1 \le i \le N$,
    $$J^N_i(u_i^*(\cdot), u_{-i}^*(\cdot)) \le \inf_{u_i(\cdot) \in \mathscr{U}_c^i[0,T]} J^N_i(u_i(\cdot), u_{-i}^*(\cdot)).$$

    \noindent {\bf (P')}: Find an $\epsilon$-Nash equilibrium strategy $u^{N*}(\cdot)$ to (\ref{cost}), subject to (\ref{state}), i.e., there exists a constant $\epsilon \ge 0$ such that for all $i$, $1 \le i \le N$,
    $$J^N_i(u_i^*(\cdot), u_{-i}^*(\cdot)) \le \inf_{u_i(\cdot) \in \mathscr{U}_d^i[0,T]} J^N_i(u_i(\cdot), u_{-i}^*(\cdot)) + \epsilon.$$

    \section{Design of decentralized strategies}

    Subsequently, the time arguments of functions may be omitted if their exclusion does not lead to confusion.
    We initially derive the following result.

    \begin{mythm}\label{result1}
        Under Assumption (A1)-(A3), for the initial value $\xi_i, i = 1,\cdots,N$, {\bf (P)} admits an optimal control $\check{u}_i(\cdot) \in \mathscr{U}_c^i[0,T], i = 1,\cdots,N$, if and only if the following two conditions hold:

        (i) For $i = 1,\cdots,N$, the adapted solution $(\check{x}_i(\cdot), \check{p}_i(\cdot), \check{q}^j_i(\cdot), j = 1,\cdots,N)$ to the FBSDE
        \begin{equation}\label{adjoint FBSDE}
            \left\{
            \begin{aligned}
                d\check{x}_i &= \big[A\check{x}_i + B\check{u}_i + f\big]dt + \big[C\check{x}_i + D\check{u}_i + g\big]dW_i,\\
                d\check{p}_i &= -\left[A\check{p}_i + C\check{q}^i_i + \left(1 - \frac{\Gamma}{N}\right)Q\left(\check{x}_i - \Gamma \check{x}^{(N)} - \eta\right)\right]dt\\
                &\quad + \sum_{j=1}^{N} \check{q}_i^j dW_j,\quad t\in[0,T],\\
                \check{x}_i(0) &= \xi_i , \quad \check{p}_i(T) = H \left(1 - \frac{\Gamma_0}{N}\right) \left(\check{x}_i(T) - \Gamma_0 \check{x}^{(N)}(T) - \eta_0\right) ,
            \end{aligned}
            \right.
        \end{equation}
        satisfies the following stationarity condition:
        \begin{equation}\label{stationarity condition}
            B\check{p}_i + D\check{q}^i_i + R\check{u}_i = 0, \quad a.e.,\ a.s..
        \end{equation}

        (ii) For $i = 1,\cdots,N$, the following convexity condition holds:
        \begin{equation}\label{convexity condition}
            \mathbb{E} \left\{\int_0^T \left[Q \left(1 - \frac{\Gamma}{N}\right)^2 \tilde{x}_i^2 + Ru_i^2\right]dt + H\left(1 - \frac{\Gamma_0}{N}\right)^2 \tilde{x}_i^2(T)\right\} \ge 0, \quad \forall u_i(\cdot) \in \mathscr{U}_c^i[0,T],
        \end{equation}
        where $\tilde{x}_i(\cdot)$ represents the solution to the following SDE:
        \begin{equation}\label{tilde x}
            \left\{
            \begin{aligned}
                d\tilde{x}_i &= \left[A\tilde{x}_i + Bu_i\right]dt + \left[C\tilde{x}_i + Du_i\right]dW_i,\quad t\in[0,T],\\
                \tilde{x}_i(0) &= 0.
            \end{aligned}
            \right.
        \end{equation}
    \end{mythm}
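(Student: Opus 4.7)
The strategy is to fix the index $i$ and reinterpret (\textbf{P}) as a single-agent optimal control problem for $\mathcal{A}_i$ with $u_{-i}\equiv\check{u}_{-i}$ frozen; since $J_i^N$ is quadratic in $u_i$ and (\ref{state}) is linear, standard convex perturbation analysis reduces optimality to a vanishing first variation and a nonnegative second variation. Concretely I perturb $u_i^\epsilon=\check{u}_i+\epsilon v_i$ with $v_i\in\mathscr{U}_c^i[0,T]$. Because the dynamics of $x_j$ for $j\neq i$ do not depend on $u_i$, only $x_i$ is affected, namely $x_i^\epsilon=\check{x}_i+\epsilon\tilde x_i$ with $\tilde x_i$ solving (\ref{tilde x}), and consequently $x^{(N),\epsilon}=\check{x}^{(N)}+\frac{\epsilon}{N}\tilde x_i$. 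Substituting into (\ref{cost}) yields the exact expansion
\[ J_i^N(\check u_i+\epsilon v_i,\check u_{-i}) = J_i^N(\check u_i,\check u_{-i}) + \epsilon\, L(v_i) + \tfrac{1}{2}\epsilon^2\,\mathcal{Q}(v_i), \]
so optimality of $\check u_i$ in $\mathscr{U}_c^i[0,T]$ is equivalent to $L(v_i)=0$ and $\mathcal{Q}(v_i)\ge 0$ for every admissible $v_i$.

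Next I would compute the two variations explicitly. Tracking the $\epsilon^2$ terms gives
\[ \mathcal{Q}(v_i) = \mathbb{E}\biggl\{\int_0^T\Bigl[Q\Bigl(1-\tfrac{\Gamma}{N}\Bigr)^{\!2}\tilde x_i^2 + R v_i^2\Bigr]dt + H\Bigl(1-\tfrac{\Gamma_0}{N}\Bigr)^{\!2}\tilde x_i^2(T)\biggr\}, \]
which is precisely (\ref{convexity condition}), and matches condition (ii). For the first variation the $\epsilon$ coefficient collects cross-products of $\tilde x_i$ against $\check x_i-\Gamma\check x^{(N)}-\eta$ (with factor $1-\Gamma/N$ coming from differentiating through both $\check x_i$ and $\check x^{(N)}$), the analogous terminal term with factor $1-\Gamma_0/N$, and $R\check u_i v_i$. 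To turn the $\tilde x_i$-terms into a duality pairing with $v_i$ I introduce the adjoint FBSDE (\ref{adjoint FBSDE})---it is well-posed since the driver is linear and the terminal is square-integrable---and apply It\^o's formula to $\check p_i\tilde x_i$ from $0$ to $T$, using $\tilde x_i(0)=0$. The cross-variation involves only $\check q_i^i$ because $d\tilde x_i$ is driven solely by $W_i$; the driver of $\check p_i$ is calibrated so that the $Q(1-\Gamma/N)(\check x_i-\Gamma\check x^{(N)}-\eta)\tilde x_i$ integrand cancels, and the terminal condition absorbs the boundary term. The identity that remains is
\[ L(v_i) = \mathbb{E}\int_0^T\bigl(B\check p_i + D\check q_i^i + R\check u_i\bigr)v_i\,dt, \]
so $L\equiv 0$ on $\mathscr{U}_c^i[0,T]$ is equivalent to the pointwise stationarity condition (\ref{stationarity condition}), i.e.\ condition (i). Sufficiency is immediate from the Taylor expansion: if (i) and (ii) both hold then $J_i^N(\check u_i+\epsilon v_i,\check u_{-i})-J_i^N(\check u_i,\check u_{-i}) = \tfrac12\epsilon^2\mathcal{Q}(v_i)\ge 0$ for every $v_i$ and every $\epsilon$, whence $\check u_i$ is optimal.

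The main obstacle is accounting for the mean-field coupling through the cost: a perturbation of $u_i$ shifts $\check x^{(N)}$ by only $\tilde x_i/N$, and it is exactly this $1/N$ contribution that forces the coefficients $(1-\Gamma/N)$ and $(1-\Gamma_0/N)$ to appear in the driver and terminal of the adjoint FBSDE. Keeping these fractional factors consistent between the direct expansion of $J_i^N$, the It\^o expansion of $\check p_i\tilde x_i$, and the adjoint equation itself is the only delicate bookkeeping step; everything else is dictated by the quadratic-plus-linear structure.
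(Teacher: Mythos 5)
Your proposal is correct and follows essentially the same route as the paper's proof: a convex perturbation $\check u_i+\theta v_i$ with $u_{-i}$ frozen, the exact linear-plus-quadratic expansion of $J_i^N$ in which the $1/N$ shift of $\check x^{(N)}$ produces the factors $\bigl(1-\frac{\Gamma}{N}\bigr)$ and $\bigl(1-\frac{\Gamma_0}{N}\bigr)$, and It\^o's formula applied to $\check p_i\tilde x_i$ to rewrite the first variation as $\mathbb{E}\int_0^T\bigl(B\check p_i+D\check q_i^i+R\check u_i\bigr)v_i\,dt$. The identification of the second variation with (\ref{convexity condition}) and of the vanishing first variation with (\ref{stationarity condition}) matches the paper's argument step for step.
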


    \begin{proof}
        We consider the agent $\mathcal{A}_i$.
        For given $\xi_i \in L^2_{\mathcal{F}_0^i}(\Omega; \mathbb{R})$, suppose that $\check{u}_i(\cdot)$ is a candidate of the optimal strategy for $\mathcal{A}_i$, suppose that $(\check{x}_i(\cdot), \check{p}_i(\cdot), \check{q}^j_i(\cdot), j = 1,\cdots,N)$ is an adapted solution to FBSDE (\ref{adjoint FBSDE}).
        For any $u_i(\cdot) \in \mathscr{U}_c^i[0,T]$ and $\theta \in \mathbb{R}$, let $x_i^{\theta}(\cdot)$ be the solution to the following perturbed state equation:
        \begin{equation}\label{perturbed state}
        \left\{\begin{aligned}
                dx_i^{\theta} &= \big[Ax_i^{\theta} + B(\check{u}_i + \theta u_i) + f\big]dt + \big[Cx_i^{\theta} + D(\check{u}_i + \theta u_i) + g\big]dW_i,\\
                x_i^{\theta}(0) &= \xi_i.
        \end{aligned}\right.
        \end{equation}
        Then, $\tilde{x}_i^{\theta}(\cdot) := \frac{x_i^{\theta}(\cdot) - \check{x}_i(\cdot)}{\theta}$ is independent of $\theta$ and satisfies (\ref{tilde x}). Applying It\^{o}'s formula to $\check{p}_i(\cdot) \tilde{x}_i(\cdot)$, we obtain
        \begin{equation*}
        \begin{aligned}
            \mathbb{E}& \bigg[H \left(1 - \frac{\Gamma_0}{N}\right) \left(\check{x}_i(T) - \Gamma_0 \check{x}^{(N)}(T) - \eta_0\right) \tilde{x}_i(T)\bigg] \\
            = \mathbb{E}& \big[\check{p}_i(T) \tilde{x}_i(T) - \check{p}_i(0) \tilde{x}_i(0)\big] \\
            = \mathbb{E}& \int_0^T \Biggl\{ -\left[A\check{p}_i + C\check{q}^i_i + \left(1 - \frac{\Gamma}{N}\right)Q\left(\check{x}_i - \Gamma \check{x}^{(N)} - \eta\right)\right]\tilde{x}_i \\
            &\qquad + \left[A\tilde{x}_i + Bu_i\right]\check{p}_i + \left[C\tilde{x}_i + Du_i\right]\check{q}^i_i \Biggr\}dt \\
            = \mathbb{E}& \int_0^T \left\{-\left(1 - \frac{\Gamma}{N}\right)Q\left(\check{x}_i - \Gamma \check{x}^{(N)} - \eta\right)\tilde{x}_i + Bu_i\check{p}_i + Du_i\check{q}^i_i \right\}dt.
        \end{aligned}
        \end{equation*}
        Therefore,
        \begin{equation*}
        \begin{aligned}
            &J^N_i(\check{u}_i(\cdot) + \theta u_i(\cdot), u_{-i}(\cdot)) - J^N_i(\check{u}_i(\cdot), u_{-i}(\cdot))\\
            =&\ \frac{1}{2} \mathbb{E} \left\{\int_0^T \left[\Big|\Big|x_i^{\theta} - \Gamma x^{(N)\theta} - \eta\Big|\Big|^2_Q + ||\check{u}_i + \theta u_i||^2_R\right]dt
            + \Big|\Big|x_i^{\theta}(T) - \Gamma_0 x^{(N)\theta}(T) - \eta_0\Big|\Big|^2_H\right\}\\
            &- \frac{1}{2} \mathbb{E} \left\{\int_0^T \left[\Big|\Big|\check{x}_i - \Gamma \check{x}^{(N)} - \eta\Big|\Big|^2_Q + ||\check{u}_i||^2_R\right]dt
            + \Big|\Big|\check{x}_i(T) - \Gamma_0 \check{x}^{(N)}(T) - \eta_0\Big|\Big|^2_H\right\} \\
            =&\ \frac{1}{2} \mathbb{E} \Biggl\{\int_0^T \left[\bigg|\bigg|\left(1 - \frac{\Gamma}{N}\right)(\check{x}_i + \theta \tilde{x}_i) - \Gamma x^{(N-1)}_{-i} - \eta\bigg|\bigg|^2_Q + ||\check{u}_i + \theta u_i||^2_R \right]dt \\
            &\qquad + \bigg|\bigg|\left(1 - \frac{\Gamma_0}{N}\right)(\check{x}_i(T) + \theta \tilde{x}_i(T)) - \Gamma_0 x^{(N-1)}_{-i}(T) - \eta_0\bigg|\bigg|^2_H \Biggr\} \\
            &- \frac{1}{2} \mathbb{E} \Biggl\{\int_0^T \left[\bigg|\bigg|\left(1 - \frac{\Gamma}{N}\right)\check{x}_i - \Gamma x^{(N-1)}_{-i} - \eta\bigg|\bigg|^2_Q + ||\check{u}_i||^2_R \right]dt \\
            &\qquad\quad + \bigg|\bigg|\left(1 - \frac{\Gamma_0}{N}\right)\check{x}_i(T) - \Gamma_0 x^{(N-1)}_{-i}(T) - \eta_0\bigg|\bigg|^2_H\Biggr\} \\
            =&\ \frac{1}{2} \theta^2 \mathbb{E} \left\{\int_0^T \bigg[Q\left(1 - \frac{\Gamma}{N}\right)^2 \tilde{x}_i^2 + Ru_i^2\bigg]dt + H \left(1 - \frac{\Gamma_0}{N}\right)^2 \tilde{x}_i^2(T)\right\} \\
            & + \theta \mathbb{E} \Biggl\{\int_0^T \left[Q\left(1 - \frac{\Gamma}{N}\right) \tilde{x}_i \left(\check{x}_i - \Gamma \check{x}^{(N)} - \eta\right) + u_iR\check{u}_i\right]dt \\
            & \qquad + H \left(1 - \frac{\Gamma_0}{N}\right) \left(\check{x}_i(T) - \Gamma_0 \check{x}^{(N)}(T) - \eta_0\right) \tilde{x}_i(T)\Biggr\} \\
            =&\ \frac{1}{2} \theta^2 \mathbb{E} \left\{\int_0^T \bigg[Q\left(1 - \frac{\Gamma}{N}\right)^2 \tilde{x}_i^2 + Ru_i^2\bigg]dt + H \left(1 - \frac{\Gamma_0}{N}\right)^2 \tilde{x}_i^2(T)\right\} \\
            & + \theta \mathbb{E} \left\{\int_0^T u_i\left[B\check{p}_i + D\check{q}^i_i + R\check{u}_i \right]dt\right\},
        \end{aligned}
        \end{equation*}
        where only here $x^{(N)\theta}(\cdot) := \frac{1}{N} \sum_{j \ne i} x_j(\cdot) + \frac{1}{N} x_i^{\theta}(\cdot)$, $\check{x}^{(N)}(\cdot) := \frac{1}{N} \sum_{j \ne i} x_j(\cdot) + \frac{1}{N} \check{x}_i(\cdot)$ and $x^{(N-1)}_{-i}(\cdot) := \frac{1}{N} \sum_{j \ne i} x_j(\cdot)$.
        Thus, we have,
        $$J^N_i(\check{u}_i(\cdot), u_{-i}(\cdot)) \le J^N_i(\check{u}_i(\cdot) + \theta u_i(\cdot), u_{-i}(\cdot))$$
        if and only if (\ref{stationarity condition}) and (\ref{convexity condition}) hold. The proof is complete.
    \end{proof}

    The next step is to obtain a proper form for deriving the decentralized feedback representation of optimal strategy.

    We denote $\xi^{(N)} := \check{x}^{(N)}(0) = \frac{1}{N} \sum_{j=1}^{N} \xi_j$, $\check{u}^{(N)}(\cdot) := \frac{1}{N} \sum_{j=1}^{N} \check{u}_j$. It follows from (\ref{adjoint FBSDE}) that
    \begin{equation}\label{checkxN}
        \left\{
        \begin{aligned}
            d\check{x}^{(N)} =& \big[A\check{x}^{(N)} + B\check{u}^{(N)} + f\big]dt + \frac{1}{N} \sum_{j=1}^{N} \big[Cx_j + Du_j + g\big]dW_j,\quad t\in[0,T],\\
            \check{x}^{(N)}(0) =&\ \xi^{(N)}.
        \end{aligned}
        \right.
    \end{equation}
    We consider the transformation
    $$\check{p}_i(\cdot) = P_N(\cdot)\check{x}_i(\cdot) + K_N(\cdot)\check{x}^{(N)}(\cdot) + \phi_N(\cdot),\quad i = 1,\cdots,N,$$
    where $P_N(\cdot)$, $K_N(\cdot)$ and $\phi_N(\cdot)$ are differential functions with $P_N(T) = H\left(1 - \frac{\Gamma_0}{N}\right)$, $K_N(T) = -H\left(1 - \frac{\Gamma_0}{N}\right)\Gamma_0$, $\phi_N(T) = -H\left(1 - \frac{\Gamma_0}{N}\right)\eta_0$.

    Then by the first equation of (\ref{adjoint FBSDE}), (\ref{checkxN}) and It\^{o}'s formula, we get
    \begin{equation}\label{checkpi}
        \begin{aligned}
            d\check{p}_i &= \dot{P}_N \check{x}_i dt + P_N\Big\{\big[A\check{x}_i + B\check{u}_i + f\big]dt + \big[C\check{x}_i + D\check{u}_i + g\big]dW_i\Big\} + \dot{K}_N \check{x}^{(N)} dt\\
            &\quad  + K_N\Big\{\big[A\check{x}^{(N)} + B\check{u}^{(N)} + f\big]dt + \frac{1}{N} \sum_{j=1}^{N} \big[Cx_j + Du_j + g\big]dW_j\Big\} + \dot{\phi}_N.
        \end{aligned}
    \end{equation}

    Comparing the coefficients of the corresponding diffusion terms with the second equation of (\ref{adjoint FBSDE}), we have
    \begin{equation}\label{checkqii}
        \check{q}^i_i = \left(P_N + \frac{1}{N} K_N\right)\big[C\check{x}_i + D\check{u}_i + g\big],
    \end{equation}
    \begin{equation}
        \check{q}^j_i = \frac{1}{N} K_N \big[C\check{x}_j + D\check{u}_j + g\big], \quad j \ne i.
    \end{equation}

    From (\ref{stationarity condition}) and (\ref{checkqii}), we have for any $i = 1,\cdots,N$,
    \begin{equation}
        \alpha_N \check{u}_i + \beta_N \check{x}_i + \gamma_N \check{x}^{(N)} + \delta_N = 0,
    \end{equation}
    where
    \begin{equation}\label{alphaN}
        \left\{
        \begin{aligned}
            &\alpha_N := R + (P_N + \frac{1}{N} K_N)D^2,\quad \beta_N := BP_N + (P_N + \frac{1}{N} K_N)CD,\\
            &\gamma_N := BK_N,\quad \delta_N := B\phi_N + (P_N + \frac{1}{N} K_N)gD.
        \end{aligned}
        \right.
    \end{equation}

    We introduce the following assumption:

    \noindent {\bf (A4)} $\alpha_N \ne 0$.

    If (A4) holds, we obtain that the optimal control is given by
    \begin{equation}
        \check{u}_i = - \alpha_N^{-1}\left[\beta_N \check{x}_i + \gamma_N \check{x}^{(N)} + \delta_N\right].
    \end{equation}
    Furthermore, we have
    \begin{equation}
        \check{u}^{(N)} = - \alpha_N^{-1}\left[(\beta_N + \gamma_N) \check{x}^{(N)} + \delta_N\right].
    \end{equation}
    This together with (\ref{checkpi}) gives
    \begin{equation}\label{PN}
        \left\{
        \begin{aligned}
            &\dot{P}_N + 2A P_N - P_N B \alpha_N^{-1} \beta_N + C \left(P_N + \frac{1}{N} K_N\right) \left(C - D \alpha_N^{-1} \beta_N\right) + \left(1 - \frac{\Gamma}{N}\right)Q = 0,\\
            &P_N(T) = H\left(1 - \frac{\Gamma_0}{N}\right),
        \end{aligned}
        \right.
    \end{equation}
    \begin{equation}\label{KN}
        \left\{
        \begin{aligned}
            &\dot{K}_N + 2A K_N - P_N B \alpha_N^{-1} \gamma_N - K_N B \alpha_N^{-1} \left(\beta_N + \gamma_N\right) \\
            &\ - CD \left(P_N + \frac{1}{N} K_N\right) \alpha_N^{-1} \gamma_N - \left(1 - \frac{\Gamma}{N}\right)Q\Gamma = 0,\\
            &K_N(T) = -H\left(1 - \frac{\Gamma_0}{N}\right)\Gamma_0,
        \end{aligned}
        \right.
    \end{equation}
    \begin{equation}\label{phiN}
        \left\{
        \begin{aligned}
            &\dot{\phi}_N + fP_N + fK_N + A \phi_N - P_N B \alpha_N^{-1} \delta_N - K_N B \alpha_N^{-1} \delta_N\\
            &\ + C \left(P_N + \frac{1}{N} K_N\right) \left(g - D \alpha_N^{-1} \delta_N\right) - \left(1 - \frac{\Gamma}{N}\right)Q\eta = 0,\\
            &\phi_N(T) = -H\left(1 - \frac{\Gamma_0}{N}\right)\eta_0.
        \end{aligned}
        \right.
    \end{equation}

    Note that (\ref{phiN}) is a linear backward {\it ordinary differential equation} (backward ODE). If (\ref{PN})-(\ref{KN}) admit solutions, then (\ref{phiN}) has a solution.
    From the above discussion, Theorem \ref{adjoint FBSDE} and Theorem 4.1 on page 47 of Ma and Yong \cite{Ma-Yong-99}, we have the following result.

    \begin{mypro}\label{Existence of uniqueness}
        Under Assumptions (A1)-(A4), if (\ref{PN})-(\ref{KN}) respectively admits a solution, then Problem {\bf (P)} admits a unique solution.
        \begin{equation}\label{optimal strategy N}
            \check{u}_i = - \alpha_N^{-1}\left[\beta_N \check{x}_i + \gamma_N \check{x}^{(N)} + \delta_N\right],\quad i = 1,\cdots,N.
        \end{equation}
    \end{mypro}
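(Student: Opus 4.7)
The plan is to leverage the decoupling ansatz $\check{p}_i = P_N \check{x}_i + K_N \check{x}^{(N)} + \phi_N$ set up before the proposition to convert the coupled FBSDE \eqref{adjoint FBSDE} with the stationarity condition \eqref{stationarity condition} into a closed-loop forward SDE, and then invoke the necessary-and-sufficient characterization in Theorem \ref{result1}. First I would observe that, given $P_N$ and $K_N$, the linear backward ODE \eqref{phiN} for $\phi_N$ is uniquely solvable, so the ansatz is fully specified. Under (A4) the feedback \eqref{optimal strategy N} is well-defined, and substituting it into the forward part of \eqref{adjoint FBSDE} yields a coupled linear SDE for $(\check{x}_1,\ldots,\check{x}_N)$ with bounded deterministic coefficients, admitting a unique strong adapted solution by standard SDE theory.

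With the $(\check{x}_i)$ in hand I would set $\check{p}_i := P_N \check{x}_i + K_N \check{x}^{(N)} + \phi_N$, define $\check{q}_i^i$ and $\check{q}_i^j$ by \eqref{checkqii} and the line immediately after, and apply It\^o's formula to $\check{p}_i$. Matching drifts after substituting the Riccati-type ODEs \eqref{PN}--\eqref{phiN} verifies the backward equation of \eqref{adjoint FBSDE}, and the prescribed terminal values of $P_N, K_N, \phi_N$ give the correct terminal $\check{p}_i(T)$. The stationarity condition \eqref{stationarity condition} collapses, via the definitions in \eqref{alphaN}, into the identity $\alpha_N \check{u}_i + \beta_N \check{x}_i + \gamma_N \check{x}^{(N)} + \delta_N = 0$, which holds by construction. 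Hence part (i) of Theorem \ref{result1} is fulfilled.

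Part (ii) of Theorem \ref{result1}, the convexity \eqref{convexity condition}, is where the real work lies. My plan is a completion-of-squares argument: apply It\^o to an auxiliary quadratic form built from $P_N$ (together with a $K_N$ cross-term, using that a single-agent perturbation satisfies $\tilde{x}^{(N)} = \tilde{x}_i/N$) along the perturbation SDE \eqref{tilde x}, then use \eqref{PN}--\eqref{KN} to isolate a term proportional to $\alpha_N u_i^2$ plus contributions reproducing the running and terminal weights in \eqref{convexity condition}. This step is the main obstacle, since the indefinite-$R$ regime does not force $\alpha_N>0$, and the mismatch between the factor $H(1-\Gamma_0/N)$ in $P_N(T)$ and the factor $H(1-\Gamma_0/N)^2$ appearing in \eqref{convexity condition} requires a careful boundary-term reconciliation.

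Once \eqref{convexity condition} is verified, Theorem \ref{result1} gives optimality of $\check{u}_i$. For uniqueness, any other optimal control must satisfy the same stationarity condition and therefore the feedback \eqref{optimal strategy N}; the resulting closed-loop SDE is uniquely solvable, pinning down the optimizer. The cited Theorem 4.1 of Ma and Yong \cite{Ma-Yong-99} supplies the corresponding FBSDE-level well-posedness, closing the argument.
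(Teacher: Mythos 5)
Your construction mirrors what the paper actually does: the text preceding the proposition derives the ansatz $\check{p}_i = P_N\check{x}_i + K_N\check{x}^{(N)} + \phi_N$, matches diffusion coefficients to get $\check{q}^i_i$ and $\check{q}^j_i$, reduces \eqref{stationarity condition} to $\alpha_N\check{u}_i + \beta_N\check{x}_i + \gamma_N\check{x}^{(N)} + \delta_N = 0$, and notes that \eqref{phiN} is a linear backward ODE solvable once \eqref{PN}--\eqref{KN} are; the paper then states the proposition with no further proof beyond citing that discussion, Theorem \ref{result1}, and Theorem 4.1 of \cite{Ma-Yong-99}. So for condition (i), for the closed-loop well-posedness, and for uniqueness via the stationarity condition, you and the paper coincide.

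The genuine gap is condition (ii), which you have correctly located but not closed. Two remarks. First, the boundary-term mismatch you worry about is not actually an obstacle: since a unilateral perturbation gives $\tilde{x}^{(N)} = \tilde{x}_i/N$, the natural quadratic form is $\big(P_N + \frac{1}{N}K_N\big)\tilde{x}_i^2$, whose terminal value is $H\big(1-\frac{\Gamma_0}{N}\big)\big(1-\frac{\Gamma_0}{N}\big) = H\big(1-\frac{\Gamma_0}{N}\big)^2$, and whose drift combines the $\big(1-\frac{\Gamma}{N}\big)Q$ term of \eqref{PN} with the $-\big(1-\frac{\Gamma}{N}\big)Q\Gamma/N$ term of \eqref{KN} to reproduce exactly the running weight $Q\big(1-\frac{\Gamma}{N}\big)^2$ in \eqref{convexity condition}; the completion of squares goes through structurally. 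Second, and decisively, what that completion of squares leaves over is a term of the form $\mathbb{E}\int_0^T \alpha_N\,\big(u_i + \text{feedback of }\tilde{x}_i\big)^2\,dt$, which is nonnegative only if $\alpha_N \ge 0$. Assumption (A4) demands only $\alpha_N \neq 0$, and in the indefinite-$R$ regime this does not exclude $\alpha_N < 0$ on a set of positive measure, in which case \eqref{convexity condition} fails and, by the "only if" direction of Theorem \ref{result1}, no optimal control exists. Hence the proposition cannot be established from (A1)--(A4) and Riccati solvability alone; one needs $\alpha_N > 0$ (or an equivalent positivity hypothesis) to finish. Your proposal is incomplete at exactly this point --- though, to be fair, the paper's own one-line citation silently omits the same step, so you have identified, rather than created, the missing argument.
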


    Inspired by the discussion above, we denote
    \begin{equation}\label{alpha}
        \left\{
        \begin{aligned}
            &\alpha := R + PD^2,\quad \beta := BP + PCD,\\
            &\gamma := BK,\quad \delta := B\phi + PgD,
        \end{aligned}
        \right.
    \end{equation}
    where $P$, $K$ and $\phi$ satisfy
    \begin{equation}\label{P}
        \left\{
        \begin{aligned}
            &\dot{P} + 2AP + CPC - \alpha^{-1} \beta^2 + Q = 0,\\
            &P(T) = H,
        \end{aligned}
        \right.
    \end{equation}
    \begin{equation}\label{K}
        \left\{
        \begin{aligned}
            &\dot{K} + 2AK - KB\alpha^{-1}(\beta + \gamma)- \beta\alpha^{-1}\gamma - Q\Gamma = 0,\\
            &K(T) = -H\Gamma_0,
        \end{aligned}
        \right.
    \end{equation}
    \begin{equation}\label{phi}
        \left\{
        \begin{aligned}
            &\dot{\phi} + fP + fK + A\phi + CPg - KB \alpha^{-1}\delta - \beta\alpha^{-1}\delta - Q\eta = 0,\\
            &\phi(T) = -H\eta_0.
        \end{aligned}
        \right.
    \end{equation}

    In the following we will discuss the solvability of the Riccati equations (\ref{P}) and (\ref{K}).
    By (\ref{alpha}), we can rewrite (\ref{P}) as
    $$\dot{P} + (2A + C^2)P + Q - (R + D^2P)^{-1}(B + CD)^2P^2 = 0,\quad P(T) = H.$$
    Two results for the solvability of this Riccati equation are given in \cite{Xu-Zhang-20}.
    When the solvability condition of the Riccati equation (\ref{P}) holds, we reorganize the Riccati equation (\ref{K}) as
    \begin{equation}\label{rewriteK}
    \left\{
        \begin{aligned}
            0 =&\ \dot{K} - Q\Gamma + 2\big[A - B(R + PD^2)^{-1}(BP + PCD)\big]K -(R + PD^2)^{-1}B^2K^2, \\
            :=&\ \dot{K} + a + bK + cK^2,\\
            K(T) =& -H\Gamma_0.
        \end{aligned}
    \right.
    \end{equation}
    According to the basic conclusion of ODE, if there exists constant $M \ge 0$, such that
    $$|a(t)| + |b(t)| + |c(t)| \le M, \quad c(t) \ne 0, \quad \forall t \in [0,T],$$
    then (\ref{rewriteK}) admits a unique solution, i.e., (\ref{K}) admits a unique solution.

    By leveraging the continuous dependence of the solution on the parameter, as discussed in \cite{Huang-Zhou-20}, and under the aforementioned conditions, we ascertain that for a sufficiently large $N$, (\ref{PN})-(\ref{phiN}) admit solutions, respectively.

    Let $N \to \infty$. By the law of large numbers, we may approximate $x^{(N)}$ in (\ref{checkxN}) with $\bar{x}$, which satisfies
    \begin{equation}\label{barx}
        d\bar{x} = \big[(A - B\alpha^{-1}(\beta + \gamma))\bar{x} - B\alpha^{-1}\delta + f\big]dt, \quad \bar{x}(0) = \bar{\xi}.
    \end{equation}
    By Proposition \ref{Existence of uniqueness}, the decentralized strategy for agent $\mathcal{A}_i, i = 1,\cdots,N$ may be taken as
    \begin{equation}\label{decentralized strategies}
        \hat{u}_i = - \alpha^{-1}[\beta\hat{x}_i + \gamma\bar{x} + \delta],
    \end{equation}
    where $\hat{x}_i$ satisfies
    \begin{equation}\label{hatx}
        \left\{
        \begin{aligned}
            d\hat{x}_i =&\ \big[(A - B\alpha^{-1}\beta)\hat{x}_i - B\alpha^{-1}\gamma\bar{x} - B\alpha^{-1}\delta + f\big]dt\\
            & + \big[(C - D\alpha^{-1}\beta)\hat{x}_i - D\alpha^{-1}\gamma\bar{x} - D\alpha^{-1}\delta + g\big]dW_i,\quad t\in[0,T],\\
            \hat{x}_i(0) =&\ \xi_i.
        \end{aligned}
        \right.
    \end{equation}

    \section{$\epsilon$-Nash equilibria}

    In this section, we aim to demonstrate that the decentralized strategies (\ref{decentralized strategies}) of agent $\mathcal{A}_i, i = 1,\cdots,N$, constitute an approximated $\epsilon$-Nash equilibrium.

    \begin{mythm}\label{eNE}
        Assume that (A1)-(A4) hold. For problem {\bf (P')}, $( \hat{u}_1(\cdot), \cdots, \hat{u}_N(\cdot))$ given in (\ref{decentralized strategies}) constitutes an $\epsilon$-Nash equilibrium, where $\epsilon = O\left(\frac{1}{\sqrt{N}}\right)$.
    \end{mythm}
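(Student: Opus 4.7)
The plan is a classical three-step comparison. I would introduce the limiting single-agent cost
\[
\bar J_i(u_i) := \tfrac{1}{2}\mathbb{E}\Big[\!\int_0^T\!\big(\|x_i-\Gamma\bar x-\eta\|_Q^2 + \|u_i\|_R^2\big)dt + \|x_i(T)-\Gamma_0\bar x(T)-\eta_0\|_H^2\Big],
\]
where $x_i$ satisfies (\ref{state}) driven by $W_i$ alone with $\bar x$ the deterministic mean from (\ref{barx}). The target inequalities are (i) $|J_i^N(\hat u_i,\hat u_{-i})-\bar J_i(\hat u_i)| = O(1/\sqrt N)$, (ii) $J_i^N(u_i,\hat u_{-i}) \ge \bar J_i(u_i) - O(1/\sqrt N)$ for every deviation $u_i\in\mathscr{U}_d^i[0,T]$, and (iii) $\bar J_i(\hat u_i) \le \bar J_i(u_i)$; chaining these yields the claimed $\epsilon = O(1/\sqrt N)$.

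Both (i) and (ii) rest on the mean-field approximation $\mathbb{E}\sup_{t\le T}|\hat x^{(N)}(t)-\bar x(t)|^2 = O(1/N)$. I would average (\ref{hatx}) over $i$ and subtract (\ref{barx}); the drift becomes linear in $\hat x^{(N)}-\bar x$ while the diffusion is a sum of $N$ mutually independent It\^o integrals with uniformly bounded integrands, so Burkholder--Davis--Gundy with Gr\"onwall gives the rate, with (A1)-(A2) handling the initial-value term $\tfrac{1}{N}\sum_j\xi_j-\bar\xi$. Expanding $J_i^N(\hat u_i,\hat u_{-i})$ around $\bar x$ in the $x^{(N)}$-dependent quadratic terms, together with uniform SDE $L^2$ bounds on $\hat x_i$, then gives (i). For (ii), a unilateral deviation changes only one summand of the empirical average, so $|x^{\ast,(N)}-\hat x^{(N)}|\le \tfrac{1}{N}|x_i^\ast-\hat x_i|$, and the same expansion works provided $\mathbb{E}\int_0^T|u_i|^2 dt$ is uniformly bounded.

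That uniform $L^2$ bound on the deviation is the main obstacle: because $R$ is only sign-indefinite, it cannot be extracted from a finite cost directly. The remedy is to restrict attention to $u_i$ satisfying $J_i^N(u_i,\hat u_{-i})\le J_i^N(\hat u_i,\hat u_{-i})$ (otherwise the $\epsilon$-inequality holds trivially) and to apply It\^o's formula to $P(t)x_i^2 + 2[K(t)\bar x(t)+\phi(t)]x_i$ along the limiting dynamics. The Riccati identities (\ref{P})-(\ref{phi}) are engineered to cancel every remaining term, yielding the completion-of-squares
\[
\bar J_i(u_i) = \Phi_0 + \tfrac{1}{2}\mathbb{E}\!\int_0^T\! \alpha(t)\,\big|u_i + \alpha^{-1}(\beta x_i+\gamma\bar x+\delta)\big|^2\,dt,
\]
where $\Phi_0$ is a deterministic constant independent of $u_i$. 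Assuming $\alpha(t)$ is bounded away from zero on $[0,T]$ (consistent with (A4) and the solvability discussion preceding Proposition \ref{Existence of uniqueness}), the integrand is non-negative and vanishes exactly at $u_i=\hat u_i$, which gives (iii); it also furnishes the coercive $L^2$ bound on $u_i$ needed to close (ii), and transferring that bound to $J_i^N$ via (i) completes the argument.
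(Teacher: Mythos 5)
Your outline is correct, but it takes a genuinely different route from the paper. The paper never forms the limiting cost $\bar J_i$: it perturbs directly inside the $N$-player game, writing $J_i^N(u_i,\hat u_{-i}) = J_i^N(\hat u_i,\hat u_{-i}) + \tilde J_i^N(\tilde u_i,\hat u_{-i}) + \mathcal{I}_i^N$ with $\tilde u_i = u_i - \hat u_i$, where $\tilde J_i^N\ge 0$ is the purely quadratic term of the convexity condition (\ref{convexity condition}) and the cross term $\mathcal{I}_i^N$ is shown to be $O(1/\sqrt N)$ by applying It\^o's formula to $\tilde x_i\hat p_i$ with $\hat p_i = P\hat x_i + K\bar x + \phi$ and invoking (\ref{P})--(\ref{phi}) together with the estimate $\mathbb{E}\int_0^T\|\hat x^{(N)}-\bar x\|^2dt = O(1/N)$. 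Your three-step comparison through $\bar J_i$ uses the same two ingredients (the mean-field approximation and the Riccati identities), but organizes them as in the fixed-point-approach literature; the completion-of-squares identity you derive for $\bar J_i$ is the integrated form of the paper's It\^o computation. What your route buys is an explicit treatment of the point the paper glosses over: the $O(1/\sqrt N)$ bound on the cross/error terms requires $\mathbb{E}\int_0^T\|u_i\|^2dt$ to be bounded uniformly over the relevant deviations, and in the indefinite setting this cannot be read off from finiteness of the cost; your coercivity argument via $\alpha\ge c_0>0$ supplies it (one still has to break the apparent circularity by noting that $u_i\mapsto u_i+\alpha^{-1}\beta x_i[u_i]$ is boundedly invertible on $L^2$, so coercivity in the completed square transfers to $u_i$). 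The only caveat is that $\alpha$ bounded away from zero and positive is strictly stronger than (A4) as stated; however, the paper's own conclusion already needs $\tilde J_i^N\ge 0$, which amounts to the same positivity, so you are not assuming more than the paper implicitly does.
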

    \begin{proof}
        Through (\ref{hatx}), we have
        \begin{equation}\label{hatxN}
            \left\{
            \begin{aligned}
                d\hat{x}^{(N)} =&\ \big[(A - B\alpha^{-1}\beta)\hat{x}^{(N)} - B\alpha^{-1}\gamma\bar{x} - B\alpha^{-1}\delta + f\big]dt\\
                & + \frac{1}{N} \sum_{j=1}^{N} \big[(C - D\alpha^{-1}\beta)\hat{x}_i - D\alpha^{-1}\gamma\bar{x} - D\alpha^{-1}\delta + g\big]dW_j,\\
                \hat{x}^{(N)}(0) =&\ \xi^{(N)}.
            \end{aligned}
            \right.
        \end{equation}
        By (\ref{barx}) and (\ref{hatxN}), it can be verified that
        \begin{equation}\label{estimate hatxN barx}
            \mathbb{E} \int_0^T ||\hat{x}^{(N)} - \bar{x}||^2dt = O\left(\frac{1}{N}\right).
        \end{equation}
        For $i = 1, \cdots,N$, denote $\tilde{u}_i(\cdot) := u_i(\cdot) -\hat{u}_i(\cdot)$ and $\tilde{x}_i(\cdot) := x_i(\cdot) -\hat{x}_i(\cdot)$.
        Then for fixed $u_{-i}$, $\tilde{x}_i(\cdot)$ satisfies
        \begin{equation*}
            d\tilde{x}_i = \left[A\tilde{x}_i + B\tilde{u}_i\right]dt + \left[C\tilde{x}_i + D\tilde{u}_i\right]dW_i, \quad \tilde{x}_i(0) = 0,\quad i = 1,\cdots,N.
        \end{equation*}
        Thus,
        \begin{equation*}
            \sum_{i=1}^{N} \mathbb{E} \int_0^T \left(||\tilde{x}_i(t)||^2 + ||\tilde{u}_i(t)||^2\right)dt < \infty.
        \end{equation*}
        From (\ref{cost}), we have
        \begin{equation}
            J_i^N(u_i(\cdot), \hat{u}_{-i}(\cdot)) = J_i^N(\hat{u}_i(\cdot), \hat{u}_{-i}(\cdot)) + \tilde{J}_i^N(\tilde{u}_i(\cdot), \hat{u}_{-i}(\cdot)) + \mathcal{I}_i^N,
        \end{equation}
        where
        \begin{equation*}
            \tilde{J}_i^N(\tilde{u}_i(\cdot), \hat{u}_{-i}(\cdot)) := \frac{1}{2} \mathbb{E} \left\{\int_0^T \left[\bigg|\bigg|\left(1 - \frac{\Gamma}{N}\right)\tilde{x}_i\bigg|\bigg|^2_Q + ||\tilde{u}_i||^2_R\right]dt + \bigg|\bigg|\left(1 - \frac{\Gamma_0}{N}\right)\tilde{x}_i(T)\bigg|\bigg|^2_H\right\},
        \end{equation*}
        and
        \begin{equation*}
            \begin{aligned}
                \mathcal{I}_i^N := \mathbb{E} \Biggl\{&\int_0^T \bigg[Q\left(1 - \frac{\Gamma}{N}\right)\tilde{x}_i\left(\hat{x}_i - \Gamma\hat{x}^{(N)} - \eta\right) + R\tilde{u}_i\hat{u}_i \bigg]dt \\
                &+ \left(1 - \frac{\Gamma_0}{N}\right)H\tilde{x}_i(T)\left(\hat{x}_i(T) - \Gamma_0\hat{x}^{(N)}(T) - \eta_0\right)\Biggr\}.
            \end{aligned}
        \end{equation*}
        Let $\hat{p}_i = P\hat{x}_i + K\bar{x} + \phi$. By (\ref{hatx}), (\ref{barx}) and It\^{o}'s formula, we obtain
        \begin{equation*}
            \begin{aligned}
                d\hat{p}_i =&\ \big[\dot{P} + P(A - B\alpha^{-1}\beta)\big]\hat{x}_i dt + \big[\dot{K} - PB\alpha^{-1}\gamma + K(A - B\alpha^{-1}(\beta + \gamma))\big]\bar{x} dt \\
                & + \big[\dot{\phi} - PB\alpha^{-1}\delta + Pf - KB\alpha^{-1}\delta + Kf\big]dt \\
                & + P\big[(C-D\alpha^{-1}\beta)\hat{x}_i - D\alpha^{-1}\gamma\bar{x} - D\alpha^{-1}\delta + g\big]dW_i.
            \end{aligned}
        \end{equation*}
        Applying It\^{o}'s formula to $\tilde{x}_i(\cdot)\hat{p}_i(\cdot)$, we have
        \begin{equation*}
        \begin{aligned}
                &\mathbb{E} \big[H\tilde{x}_i(T)\left(\hat{x}_i(T) - \Gamma_0\bar{x}(T) - \eta_0\right)\big] \\
                =&\ \mathbb{E} \big[\tilde{x}_i(T)\big(P(T)\hat{x}_i(T) + K(T)\bar{x}(T) + \phi(T)\big) - \tilde{x}_i(0)\big(P(0)\hat{x}_i(0) + K(0)\bar{x}(0) + \phi(0)\big)\big] \\
                =&\ \mathbb{E} \int_0^T \bigg\{\left[A\tilde{x}_i + B\tilde{u}_i\right](P\hat{x}_i + K\bar{x} + \phi) + \big[\dot{P} + P(A - B\alpha^{-1}\beta)\big]\tilde{x}_i\hat{x}_i\\
                &\qquad  + \big[\dot{K} - PB\alpha^{-1}\gamma + K(A - B\alpha^{-1}(\beta + \gamma))\big]\tilde{x}_i\bar{x} \\
                &\qquad + \big[\dot{\phi} - PB\alpha^{-1}\delta + Pf - KB\alpha^{-1}\delta + Kf\big]\tilde{x}_i \\
                &\qquad + \left[C\tilde{x}_i + D\tilde{u}_i\right]P\big[(C-D\alpha^{-1}\beta)\hat{x}_i - D\alpha^{-1}\gamma\bar{x} - D\alpha^{-1}\delta + g\big] \bigg\} \\
                =&\ \mathbb{E} \int_0^T \bigg\{ \left[\dot{P} + 2AP - PB\alpha^{-1}\beta + CP(C-D\alpha^{-1}\beta)\right]\tilde{x}_i\hat{x}_i + \big[B\hat{p}_i + D\hat{q}^i_i\big]\tilde{u}_i \\
                &\qquad + \left[\dot{K} + 2KA - PB\alpha^{-1}\gamma - KB\alpha^{-1}(\beta + \gamma) - CPD\alpha^{-1}\gamma\right]\tilde{x}_i\bar{x} \\
                &\qquad + \biggl[\dot{\phi} + A\phi + Pf + Kf - PB\alpha^{-1}\delta - KB\alpha^{-1}\delta - CPD\alpha^{-1}\delta + CPg\biggr]\tilde{x}_i \bigg\}dt \\
                =&\ \mathbb{E} \int_0^T \bigg\{-Q\tilde{x}_i\hat{x}_i + Q\Gamma\tilde{x}_i\bar{x} + Q\eta\tilde{x}_i - R\hat{u}_i\tilde{u}_i \bigg\}dt \\
                =&\ \mathbb{E} \int_0^T \bigg\{-Q\tilde{x}_i(\hat{x}_i - \Gamma\bar{x} - \eta) - R\hat{u}_i\tilde{u}_i \bigg\}dt,
        \end{aligned}
        \end{equation*}
        where $\hat{q}^i_i := P\big(C\hat{x}_i + D\hat{u}_i + g\big)$. The penultimate equation is due to (\ref{stationarity condition}) and (\ref{P})-(\ref{phi}).
        Then,
        \begin{equation*}
            \begin{aligned}
                \mathcal{I}_i^N =&\ \mathbb{E} \bigg\{\int_0^T \bigg[Q\left(1 - \frac{\Gamma}{N}\right)\tilde{x}_i\big(\hat{x}_i - \Gamma\bar{x} - \eta\big) \
                + Q\left(1 - \frac{\Gamma}{N}\right)\tilde{x}_i\Gamma\left(\bar{x} - \hat{x}^{(N)}\right) + R\tilde{u}_i\hat{u}_i \bigg]dt \\
                &\quad + H\left(1 - \frac{\Gamma_0}{N}\right)\tilde{x}_i(T)\big(\hat{x}_i(T) - \Gamma_0\bar{x}(T) - \eta_0 \big)\\
                &\quad + H\left(1 - \frac{\Gamma_0}{N}\right)\tilde{x}_i(T)\Gamma_0\left(\bar{x}(T) - \hat{x}^{(N)}(T)\right)\bigg\} \\
                =&\ \mathbb{E} \biggl\{\int_0^T \biggl[\frac{\Gamma}{N}R\hat{u}_i\tilde{u}_i + Q\left(1 - \frac{\Gamma}{N}\right)\tilde{x}_i\Gamma\left(\bar{x} - \hat{x}^{(N)}\right)\biggr]dt \\
                &\quad + \frac{\Gamma - \Gamma_0}{N} H\tilde{x}_i(T)\big(\hat{x}_i(T) - \Gamma_0\bar{x}(T) - \eta_0 \big) \\
                &\quad + H\left(1 - \frac{\Gamma_0}{N}\right)\tilde{x}_i(T)\Gamma_0\left(\bar{x}(T) - \hat{x}^{(N)}(T)\right)\biggr\} = O\left(\frac{1}{\sqrt{N}}\right).
            \end{aligned}
        \end{equation*}
        Thereby,
        $$J_i^N(\hat{u}_i(\cdot), \hat{u}_{-i}(\cdot)) \le J_i^N(u_i(\cdot), \hat{u}_{-i}(\cdot)) + \epsilon.$$
        Thus, $(\hat{u}_1(\cdot),\cdots,\hat{u}_N(\cdot))$ is an $\epsilon$-Nash equilibrium, where $\epsilon = O\left(\frac{1}{\sqrt{N}}\right)$.
    \end{proof}

    \section{Numerical examples}

    In this section, an example of numerical simulation of Problem {\bf (P')} is given to verify the conclusions we obtained in the previous section.
    Let $A=B=f=C=D=g=Q=R=H=\Gamma=\eta=\Gamma_0=\eta_0=1$.
    The time interval is set to $[0,10]$.
    Each player's initial state is drawn independently from a uniform distribution on $[0,20]$.
    The curve of $P(t)$ and $K(t)$, described by (\ref{P})-(\ref{K}), are shown in Figure \ref{PandK_plot}.
    \begin{figure}[h]
        \centering\includegraphics[width=12cm]{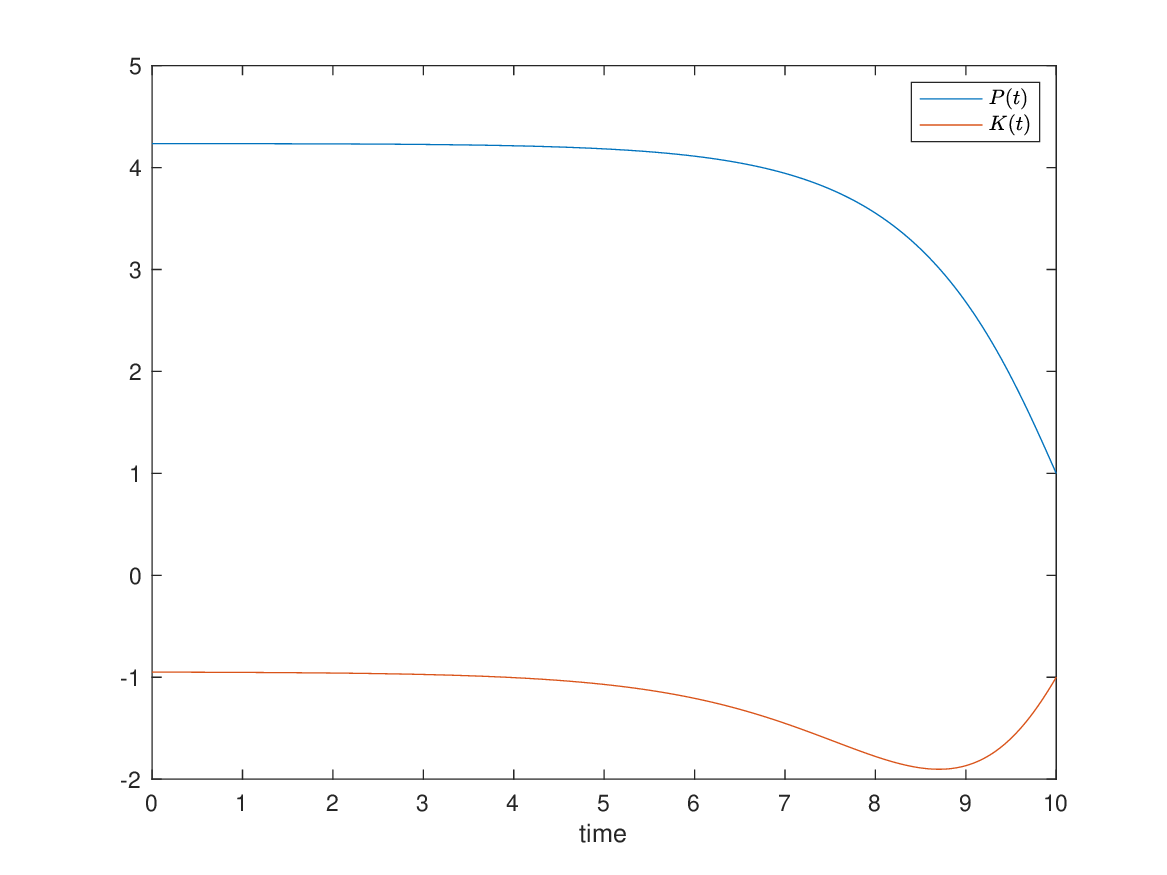}
        \caption{The curves of $P(t)$ and $K(t)$}
        \label{PandK_plot}
    \end{figure}

    We denote the performance of the decentralized strategy by $\epsilon(N) = \bigl(\mathbb{E}\int_{0}^{T}||\hat{x}^{(N)} - \bar{x}||^2 dt\bigr)^{\frac{1}{2}}$.
    A plot of $\epsilon(N)$ with respect to $N$ is shown in Figure \ref{epsilon_plot}, which confirms the consistency of the mean field approximation.

    \begin{figure}[h]
        \centering\includegraphics[width=12cm]{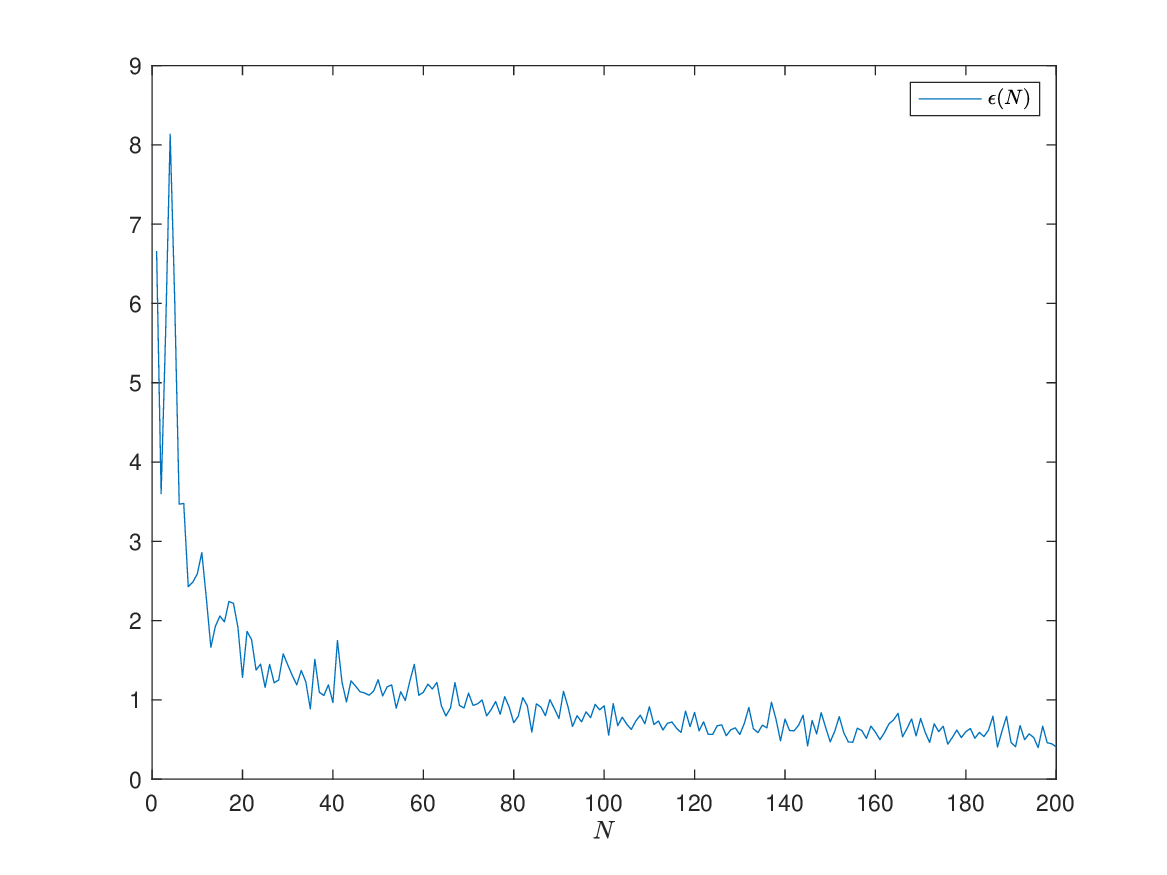}
        \caption{$\epsilon(N)$ with respect to $N$}
        \label{epsilon_plot}
    \end{figure}

    \section{Conclusions}

    In this paper, we have investigated an indefinite LQ mean field stochastic differential game.
    We use the direct approach to tackle this problem, and the resultant decentralized strategy is demonstrated to be an $\epsilon$-Nash equilibrium.
    Direct approach (\cite{Huang-Zhou-20}, \cite{Wang-Zhang-Zhang-20}, \cite{Wang-24}) applying to more kinds of mean field games is our future research interest.

\end{document}